\numberwithin{equation}{section}
\newtheorem{remark}{Remark}
\newtheorem{theorem}{Theorem}
\newtheorem{corollary}{Corollary}
\theoremstyle{definition}
\newtheorem{definition}{Definition}
\theoremstyle{property}
\newcommand{\vast}{\bBigg@{1}}
\newcommand{\Vast}{\bBigg@{2}}
\begin{document}
\title{Inequalities related to Symmetrized Harmonic Convex Functions}
\author{Shanhe Wu, Basharat Rehman Ali, Imran Abbas Baloch, Absar Ul Haq}
\address{Shanhe Wu\\Department of Mathematics\\
Longyan University, Longyan, Fujian, 364012, China}\email{shanhewu@gmail.com}
\address{Basharat Rehman Ali\\Abdus Salam School of Mathematical Sciences\\
GC University, Lahore, Pakistan}\email{basharatrwp@sms.edu.pk}
\address{Imran Abbas Baloch\\Abdus Salam School of Mathematical Sciences\\
GC University, Lahore, Pakistan}\email{iabbasbaloch@gmail.com\\iabbasbaloch@sms.edu.pk}
\address{Absar Ul Haq\\Department of Mathematics, School of Sciences\\ University of Management and Technology,Sialkot Campus, Pakistan.}\email{absarulhaq@hotmail.com}

\subjclass[2010]{Primary: 26D15;25D10.}

\keywords{Harmonic convex functions, Hermite-Hadamard type inequalities, Integral inequalities, Symmetrized Harmonic Convex functions }

\begin{abstract}
In this paper, we extend the Hermite-Hadamard type $\dot{I}$scan inequality to the class of symmetrized harmonic convex functions. The corresponding version for harmonic h-convex functions is also investigated. Furthermore, we establish Hermite-Hadamard type inequalites for the product of a harmonic convex function with a symmetrized harmonic convex function.
\end{abstract}
\maketitle
\section{\bf{Introduction}}
The following inequality holds for any convex function $f$ defined on $\mathbb{R}$
\begin{equation}
f \left ( \frac{a + b}{2} \right) \leq \frac{1}{b - a} \int_{a}^{b} f(t) dt \leq \frac{f(a) + f(b)}{2}\;a,b\; \in \mathbb{R},\; a \neq b
\end{equation}
known as Hermite-Hadamard inequality. \\
In recent past, convexity has been generalized and extended in various aspects using new and different concepts, (see [6-15,19,20,22-24]) and references therein. $\dot{I}$scan \cite{Ak}, investigated and studied a new generalized class of convex functions which are called harmonically convex functions. The following inequality holds for any harmonic convex function $f$ defined on $\mathbb{R}/ \{0\}$
\begin{equation}\label{E1}
f \left ( \frac{2ab}{a + b} \right) \leq \frac{ab}{b - a} \int_{a}^{b} \frac{f(t)}{t^{2}} dt \leq \frac{f(a) + f(b)}{2}\;a,b\; \in \mathbb{R},\; a \neq b.
\end{equation}
Recently, I. A. Baloch and I. $\dot{I}$scan deeply studied and established very interesting results for this class. Furthermore, they generalized this class in various aspects and established different type of inequalities for these classes (see [1,2,3,4,5]).\\
  In this paper, we show that the Hermite-Hadamard type $\dot{I}$scan inequality (\ref{E1}) can be extended to a larger class of functions containing the class of harmonic convex functions. Moreover, we establish various type of inequalities for this said class.
\section{\bf{Symmetrized harmonic convexity}}
For a function $f : [a, b] \subset \mathbb{R}/ \{0\} \rightarrow \mathbb{C}$, we consider the symmetrical transform of $f$ on the interval $[a,b]$, denoted by $f^{\breve{}}_{[a,b]}$ or simply $f^{\breve{}}$ as defined by
\begin{equation}\label{E2}
f^{\breve{}}(t) := \frac{1}{2} \big[ f(t) + f\big( \frac{abt}{(a+b)t-ab}\big)  \big], \;\;t \;\in \;[a,b].
\end{equation}
The anti-symmetrical transform of $f$ on the interval $[a,b]$ is denoted by $f^{\tilde{}}_{[a,b]}$ or simply $f^{\tilde{}}$ as defined by
\begin{equation}
f^{\tilde{}}(t) := \frac{1}{2} \big [ f(t) - f\big( \frac{abt}{(a+b)t-ab}\big)  \big], \;\;t \;\in \;[a,b].
\end{equation}
It is obvious that for any function $f$ we have $f^{\breve{}} + f^{\tilde{}} = f.$\\
If $f$ is harmonic convex on $[a,b]$, then for any $x,y \in [a,b]$ and $ \alpha, \beta \geq 0$ with $\alpha + \beta = 1$, we have
\begin{eqnarray*}
f^{\breve{}}(\frac{xy}{\alpha x + \beta y}) &=& \frac{1}{2}\left[ {f\left( {\frac{{xy}}{{\alpha x +  \beta y}}} \right) + f\left( {\frac{{abxy}}{{\left( {a + b} \right)xy - ab\left( {\alpha x +  \beta y} \right)}}} \right)} \right]\\
&=& \frac{1}{2}\left[ {f\left( {\frac{{xy}}{{\alpha x + \beta y}}} \right) + f\left( {\frac{{\frac{{abx}}{{\left( {a + b} \right)x - ab}}.\frac{{aby}}{{\left( {a + b} \right)y - ab}}}}{{\alpha \frac{{abx}}{{\left( {a + b} \right)x - ab}} + \beta\frac{{aby}}{{\left( {a + b} \right)y - ab}}}}} \right)} \right]\\
& \le& \frac{1}{2}\left[ {\alpha f\left( y \right) + \beta f\left( x \right) + \alpha f\left( {\frac{{aby}}{{\left( {a + b} \right)y - ab}}} \right) + \beta f\left( {\frac{{abx}}{{\left( {a + b} \right)x - ab}}} \right)} \right]\\
&=& \alpha f^{\breve{}}\left( y \right) + \beta f^{\breve{}}\left( x \right),
\end{eqnarray*}
which shows that $f^{\breve{}}$ is harmonic convex on $[a,b]$.

\textbf{Counter Example}\\
Consider the function $f(x) = - \ln(x)$ for $x \in (0, \infty)$. This function $f$ is not harmonically convex as if we take $t=\frac{1}{2}$, $x=e$ and $y=2e$, then $\frac{1}{2} \left(f(x) + f(y)\right) = - 1 - \frac{1}{2} \ln (2) < f\left( \frac {2xy}{x+y}\right)= \ln 3 - \ln 4 - 1$.
\begin{eqnarray*}
f^{\breve{}}(x)&=&\frac{1}{2} \left[ f(x)+ f \left( \frac{abx}{(a+b)x - ab}\right) \right]\\
&=& \frac{1}{2} \left[ \ln \left(\frac{1}{x} \right) + \ln \left(\frac{(a+b)x - ab}{ab}\right)\right]
\end{eqnarray*}

$$ F(x) := f^{\breve{}}\left(\frac{1}{x}\right)= \frac{1}{2} \left[ \ln (x) + \ln \left(\frac{a+b-abx}{abx}\right) \right] = \frac{1}{2} \ln \left(\frac{a+b-abx}{ab}\right)$$

$$  F'(x) = -\frac{1}{2} \frac{ab}{a+b-abx} $$
	
$$ F''(x) = \frac{1}{2} \left( \frac{ab}{a+b-abx} \right)^2 > 0$$

As $F$ is convex, so $f^{\breve{}}$ is harmonic convex.

\begin{definition}
 A function $f : I \subset \mathbb{R} \setminus \{0\} \to \mathbb{R}$ is said to be symmetrized harmonic convex (concave) on $I$ if $f^{\breve{}}$ is harmonic convex (concave) on $I$.
\end{definition}

Now if $HC(I)$ is the class of harmonic convex functions defined on $I$ and $SHC(I)$ is the class of symmetrized harmonic convex functions on $I$ then

\begin{equation}
HC(I) \subsetneq SHC(I)
\end{equation}
\begin{definition}
 A function $f : I \subset \mathbb{R} \setminus \{0\} \to \mathbb{R}$ is said to be symmetrized harmonic $p$-convex (concave) on $I$ if $f^{\breve{}}$ is harmonic $p$-convex (concave) on $I$.
\end{definition}
\begin{theorem}
Assume that $f : [a,b]  \subset \mathbb{R} \setminus \{0\} \to \mathbb{R}$ is symmetrized harmonic convex and integrable on $[a,b]$. Then we have the Hermite-Hadamard type $\dot{I}$scan inequalities
\begin{equation}
f\big( \frac{2ab}{a + b}  \big) \leq \frac{ab}{ b - a} \int_{a}^{b} \frac{f(x)}{x^{2}} dx \leq \frac{f(a) + f(b)}{2}
\end{equation}
\end{theorem}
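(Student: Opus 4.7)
The plan is to reduce the theorem to the known İscan inequality (\ref{E1}) applied to the symmetrical transform $f^{\breve{}}$. By hypothesis $f^{\breve{}}$ is harmonic convex on $[a,b]$, so (\ref{E1}) yields
\begin{equation*}
f^{\breve{}}\Bigl(\tfrac{2ab}{a+b}\Bigr) \;\leq\; \frac{ab}{b-a}\int_a^b \frac{f^{\breve{}}(x)}{x^2}\,dx \;\leq\; \frac{f^{\breve{}}(a)+f^{\breve{}}(b)}{2}.
\end{equation*}
The whole proof then consists in identifying each of these three quantities with the corresponding quantity for $f$.

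First I would evaluate the two endpoint terms by unwinding the definition (\ref{E2}). The map $\varphi(t):=\frac{abt}{(a+b)t-ab}$ satisfies $\varphi(a)=b$ and $\varphi(b)=a$, so a direct substitution gives $f^{\breve{}}(a)=f^{\breve{}}(b)=\tfrac12\bigl(f(a)+f(b)\bigr)$, hence $\frac{f^{\breve{}}(a)+f^{\breve{}}(b)}{2}=\frac{f(a)+f(b)}{2}$. Next, the harmonic mean $\frac{2ab}{a+b}$ is the unique fixed point of $\varphi$ in $[a,b]$, so $f^{\breve{}}\bigl(\tfrac{2ab}{a+b}\bigr)=f\bigl(\tfrac{2ab}{a+b}\bigr)$. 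These two steps take care of the outer bounds.

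The middle term requires showing that the integral is invariant under the symmetrisation, i.e.\
\begin{equation*}
\int_a^b \frac{f^{\breve{}}(x)}{x^2}\,dx \;=\; \int_a^b \frac{f(x)}{x^2}\,dx.
\end{equation*}
By (\ref{E2}) this reduces to proving $\int_a^b x^{-2} f(\varphi(x))\,dx = \int_a^b x^{-2} f(x)\,dx$, which I would establish by the change of variable $y=\varphi(x)$. A short computation gives $\varphi'(x)=-\frac{a^2b^2}{[(a+b)x-ab]^2}$ and $\varphi(x)^2=\frac{a^2b^2 x^2}{[(a+b)x-ab]^2}$, so $\frac{dy}{y^2}=-\frac{dx}{x^2}$; combined with $\varphi(a)=b$, $\varphi(b)=a$ this rewrites the left-hand integral as the right-hand one.

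The conceptual content is minimal — the work is in the change-of-variable bookkeeping, and that is the only real obstacle. The key invariance $\frac{dx}{x^2}=-\frac{dy}{y^2}$ is precisely why $\varphi$ is the ``correct'' involution to symmetrise against in the harmonic-convex setting (just as $t\mapsto a+b-t$ is the correct one in the classical convex setting). Once this is noted, assembling the three identifications into the claimed double inequality is immediate.
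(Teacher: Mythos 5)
Your proposal is correct and follows essentially the same route as the paper: apply the İscan Hermite--Hadamard inequality to $f^{\breve{}}$ and then check that the midpoint value, the endpoint average, and the weighted integral of $f^{\breve{}}$ coincide with those of $f$. The only difference is that you spell out the change-of-variable computation $\frac{dy}{y^2}=-\frac{dx}{x^2}$ for the integral identity, which the paper states without detail.
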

\begin{proof}
Since $f : [a,b]  \subset \mathbb{R} \setminus \{0\} \to \mathbb{R}$ is symmetrized harmonic convex, then by writing the Hermite-Hadamard type $\dot{I}$scan inequalities for $f^{\breve{}}$ we have
\begin{equation}\label{E0}
f^{\breve{}}\big( \frac{2ab}{a + b}  \big) \leq \frac{ab}{ b - a} \int_{a}^{b} \frac{f^{\breve{}}(x)}{x^{2}} dx \leq \frac{f^{\breve{}}(a) + f^{\breve{}}(b)}{2}.
\end{equation}
However,\\
$$f^{\breve{}}\big( \frac{2ab}{a + b}  \big) = f\big( \frac{2ab}{a + b}  \big),\; \frac{f^{\breve{}}(a) + f^{\breve{}}(b)}{2} =  \frac{f(a) + f(b)}{2},$$\\
and
$$\int_{a}^{b}\frac{f^{\breve{}}(x)}{x^{2}} dx = \int_{a}^{b} \frac{f(x)}{x^{2}} dx.$$
Then by (\ref{E0}) we get required inequalities.
\end{proof}
\begin{theorem}
Assume that $f : [a,b]  \subset \mathbb{R} \setminus \{0\} \to \mathbb{R}$ is symmetrized harmonic convex on $[a,b]$. Then for any $x \in [a,b]$, we have the bounds
\begin{equation}\label{E01}
f\big( \frac{2ab}{a + b} \big) \leq f^{\breve{}}(x) \leq \frac{f(a) + f(b)}{2}
\end{equation}
\end{theorem}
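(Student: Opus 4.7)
The strategy is to lift everything through the symmetrical transform: since $f^{\breve{}}$ is harmonic convex on $[a,b]$ by the symmetrized-convex hypothesis, both inequalities should come from applying the usual harmonic convex combination bounds to $f^{\breve{}}$ and then expressing the boundary and midpoint values of $f^{\breve{}}$ in terms of $f$.

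The first preparatory step is to record three identities. Let $\sigma(t) := \frac{abt}{(a+b)t - ab}$ denote the involution appearing in (\ref{E2}). A direct substitution gives $\sigma(a) = b$, $\sigma(b) = a$, and $\sigma\!\left(\frac{2ab}{a+b}\right) = \frac{2ab}{a+b}$, and in general $\sigma(\sigma(t)) = t$. Consequently
\[
f^{\breve{}}(a) = f^{\breve{}}(b) = \frac{f(a)+f(b)}{2}, \qquad f^{\breve{}}\!\left(\frac{2ab}{a+b}\right) = f\!\left(\frac{2ab}{a+b}\right),
\]
and $f^{\breve{}}(\sigma(x)) = f^{\breve{}}(x)$ for every $x \in [a,b]$ (since the two arguments inside the definition of $f^{\breve{}}$ just swap). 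The second preparatory observation is the key algebraic identity
\[
\frac{1}{x} + \frac{1}{\sigma(x)} = \frac{a+b}{ab},
\]
which is where the involution reveals its harmonic character; I would verify it by direct computation. It implies $\frac{2x\,\sigma(x)}{x+\sigma(x)} = \frac{2ab}{a+b}$, so the fixed point of $\sigma$ is precisely the harmonic mean of $x$ and $\sigma(x)$.

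With these identities in hand, the upper bound in (\ref{E01}) follows immediately: every $x \in [a,b]$ can be written in the form $x = \frac{ab}{\alpha b + (1-\alpha)a}$ for a unique $\alpha \in [0,1]$, and harmonic convexity of $f^{\breve{}}$ together with $f^{\breve{}}(a) = f^{\breve{}}(b) = \frac{f(a)+f(b)}{2}$ gives
\[
f^{\breve{}}(x) \le \alpha f^{\breve{}}(a) + (1-\alpha) f^{\breve{}}(b) = \frac{f(a)+f(b)}{2}.
\]
For the lower bound, I would apply the harmonic midpoint inequality (the $\alpha = \tfrac12$ case of harmonic convexity) to $f^{\breve{}}$ at the pair $x, \sigma(x) \in [a,b]$:
\[
f\!\left(\tfrac{2ab}{a+b}\right) = f^{\breve{}}\!\left(\tfrac{2x\sigma(x)}{x+\sigma(x)}\right) \le \tfrac{1}{2}\bigl[f^{\breve{}}(x) + f^{\breve{}}(\sigma(x))\bigr] = f^{\breve{}}(x),
\]
where the last equality uses $f^{\breve{}}(\sigma(x)) = f^{\breve{}}(x)$.

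The main obstacle, and really the only non-routine step, is spotting the identity $\frac{1}{x} + \frac{1}{\sigma(x)} = \frac{a+b}{ab}$, which turns $\frac{2ab}{a+b}$ into the harmonic mean of an $x$-and-mirror pair and lets the lower bound fall out of a single application of harmonic midpoint convexity. Everything else is bookkeeping with the definition of $f^{\breve{}}$.
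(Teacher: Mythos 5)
Your proof is correct and follows essentially the same route as the paper: the lower bound via the harmonic midpoint inequality for $f^{\breve{}}$ at the pair $x$ and $\frac{abx}{(a+b)x-ab}$, whose harmonic mean is $\frac{2ab}{a+b}$, and the upper bound by writing $x$ as a harmonic convex combination of $a$ and $b$ and using $f^{\breve{}}(a)=f^{\breve{}}(b)=\frac{f(a)+f(b)}{2}$. The only difference is cosmetic: you parametrize the combination by $\alpha$ and spell out the involution identities, whereas the paper writes the barycentric coefficients $\frac{b(a-x)}{x(a-b)}$ and $\frac{a(x-b)}{x(a-b)}$ explicitly.
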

\begin{proof}
Since $f^{\breve{}}$ is harmonic convex on $[a,b]$, then for any $x \in [a,b]$ we have
$$  f^{\breve{}}(\frac{2ab}{a + b}) \leq \frac{f^{\breve{}}(x) + f^{\breve{}}(\frac{abx}{(a + b)x - ab})}{2} ,$$
and since
$$\frac{f^{\breve{}}(x) + f^{\breve{}}(\frac{abx}{(a + b)x - ab})}{2} = \frac{f(x) + f(\frac{abx}{(a + b)x - ab})}{2},$$
while
$$f^{\breve{}}(\frac{2ab}{a + b}) = f(\frac{2ab}{a + b}),$$
we get the first inequality in (\ref{E01}).\\
Also, by the harmonic convexity of $f^{\breve{}}$ we have any $x \in [a,b]$ that
\begin{eqnarray*}
f^{\breve{}}(x) &\leq& \frac{b (a - x)}{x ( a - b)}f^{\breve{}}(b) + \frac{a (x - b)}{x ( a - b)}f^{\breve{}}(a)\\
&=& \frac{b (a - x)}{x ( a - b)}\frac{f(a) + f(b)}{2} + \frac{a (x - b)}{x ( a - b)} \frac{f(a) + f(b)}{2}\\
&=& \frac{f(a) + f(b)}{2},
\end{eqnarray*}
which gives the second inequality in (\ref{E01}).
\end{proof}
\begin{remark}
If $f : [a,b]  \subset \mathbb{R} \setminus \{0\} \to \mathbb{R}$ is symmetrized harmonic convex on $[a,b]$. Then we have bounds
$$\inf_{x \in [a,b]} f^{\breve{}}(x) = f^{\breve{}}(\frac{2ab}{a + b}) = f(\frac{2ab}{a + b})$$
and
$$\sup_{x \in [a,b]} f^{\breve{}}(x) = f^{\breve{}}(a) = f^{\breve{}}(b) = \frac{f(a) + f(b)}{2}$$
\end{remark}
%
\begin{theorem}
Assume that $f: [a,b]   \subset \mathbb{R} \setminus \{0\} \to \mathbb{R}$ is symmetrized harmonic convex on interval $[a,b]$. Then for any $x,y \in [a,b]$ with $x \neq y$ we have
$$\frac{1}{2} \left[ f\left( \frac{2xy}{x + y} \right) + f\left( \frac{2abxy}{2xy(a + b) - ab(x + y)} \right)  \right]$$
$$ \leq \frac{xy}{2(y - x)}\left[ \int_{x}^{y} \frac{f(t)}{t^{2}}dt + \frac{1}{2} \int_{\frac{aby}{(a + b)y - ab}}^{\frac{abx}{(a + b)x - ab}} \frac{f (t)}{t^{2}} dt \right]$$
\begin{equation}\label{E001}
 \leq \frac{1}{4}\left[ f(x) + f\left( \frac{abx}{(a + b)x - ab} \right) + f(y) + f\left( \frac{aby}{(a + b)y - ab} \right)    \right]
 \end{equation}
\end{theorem}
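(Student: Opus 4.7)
The plan is to apply the Hermite--Hadamard type $\dot{I}$scan inequality (\ref{E1}) to the symmetrical transform $f^{\breve{}}$ on the subinterval $[x,y] \subset [a,b]$. Since $f$ is symmetrized harmonic convex on $[a,b]$, by definition $f^{\breve{}}$ is harmonic convex on $[a,b]$; because harmonic convexity is a two-point condition, it is inherited by every subinterval, and integrability of $f^{\breve{}}$ on $[x,y]$ follows from integrability of $f$ on $[a,b]$. Thus (\ref{E1}) applied to $f^{\breve{}}$ on $[x,y]$ yields
$$f^{\breve{}}\!\left(\frac{2xy}{x+y}\right) \leq \frac{xy}{y-x}\int_x^y \frac{f^{\breve{}}(t)}{t^2}\,dt \leq \frac{f^{\breve{}}(x) + f^{\breve{}}(y)}{2}.$$
The task then reduces to rewriting each of the three quantities in terms of $f$ using the defining formula (\ref{E2}).

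The two outer quantities are routine. Writing $\phi(t) := \frac{abt}{(a+b)t - ab}$ for the harmonic reflection, a short algebraic simplification shows that $\phi\!\left(\frac{2xy}{x+y}\right) = \frac{2abxy}{2xy(a+b) - ab(x+y)}$, so the left-hand quantity matches the first line of the theorem, while the right-hand quantity expands immediately to $\frac{1}{4}\bigl[f(x) + f(\phi(x)) + f(y) + f(\phi(y))\bigr]$, matching the last line.

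The middle term is where the real work lies. I would split
$$\int_x^y \frac{f^{\breve{}}(t)}{t^2}\,dt = \frac{1}{2}\int_x^y \frac{f(t)}{t^2}\,dt + \frac{1}{2}\int_x^y \frac{f(\phi(t))}{t^2}\,dt,$$
and handle the second summand by the substitution $u = \phi(t)$. This is the one calculation requiring genuine care. Exploiting that $\phi$ is an involution on $[a,b]$ (it swaps $a$ and $b$) with derivative $\frac{d\phi(u)}{du} = -\frac{a^2b^2}{[(a+b)u-ab]^2}$ and $\phi(u)^2 = \frac{a^2b^2u^2}{[(a+b)u-ab]^2}$, the Jacobian collapses to the clean identity $\frac{dt}{t^2} = -\frac{du}{u^2}$. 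Since $\phi$ is decreasing on $[a,b]$ and $x<y$ forces $\phi(x) > \phi(y)$, the minus sign is absorbed by reversing the limits, giving
$$\int_x^y \frac{f(\phi(t))}{t^2}\,dt = \int_{\phi(y)}^{\phi(x)} \frac{f(u)}{u^2}\,du,$$
which is precisely the second integral appearing in the statement. Substituting the three rewritten expressions back into the Hermite--Hadamard inequality for $f^{\breve{}}$ and collecting the prefactors produces (\ref{E001}). The change-of-variables step is the main obstacle; everything else is essentially notational unpacking of the definition of $f^{\breve{}}$.
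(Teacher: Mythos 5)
Your proposal is correct and takes essentially the same route as the paper: apply the Hermite--Hadamard type $\dot{I}$scan inequality to $f^{\breve{}}$ on the subinterval $[x,y]$, unpack the two endpoint terms from the definition of $f^{\breve{}}$, and convert $\int_x^y f\big(\tfrac{abt}{(a+b)t-ab}\big)t^{-2}dt$ by the substitution $u=\tfrac{abt}{(a+b)t-ab}$, for which $\tfrac{dt}{t^2}=-\tfrac{du}{u^2}$. Note only that both your computation and the paper's actually give the middle term as $\tfrac{xy}{2(y-x)}\big[\int_x^y \tfrac{f(t)}{t^2}dt+\int_{aby/((a+b)y-ab)}^{abx/((a+b)x-ab)}\tfrac{f(t)}{t^2}dt\big]$ with equal weights, so the extra factor $\tfrac12$ on the second integral in the displayed statement is a typo rather than something either argument produces.
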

\begin{proof}
Since $f^{\breve{}}$ is harmonic convex on $[a,b]$, then $f^{\breve{}}$ is also harmonic convex on any subinterval $[x,y]$ (or $[y,x]$), where $x,y \in [a,b]$.

By the Hermite-Hadamard type $\dot{I}$scan inequalities for harmonic convex functions we have

\begin{equation}\label{E0001}
f^{\breve{}}(\frac{2xy}{x + y}) \leq \frac{xy}{y - x}\int_{x}^{y} \frac{f^{\breve{}}(t)}{t^{2}}dt \leq \frac{f^{\breve{}}(x) + f^{\breve{}}(y)}{2}
\end{equation}
for any $x,y \in [a,b]$ with $x \neq y.$\\
We have
$$ f^{\breve{}}(\frac{2xy}{x + y}) = \frac{1}{2} \left[ f\left( \frac{2xy}{x + y} \right) + f\left( \frac{2abxy}{2xy(a + b) - ab(x + y)} \right)  \right],$$
\begin{eqnarray*}
\int_{x}^{y} \frac{f^{\breve{}}(t)}{t^{2}}dt &=& \frac{1}{2} \int_{x}^{y} \left[ \frac{f(t)}{t^{2}} + \frac{f \left( \frac{abt}{(a + b)t - ab}\right)}{t^{2}}  \right]dt\\
&=& \frac{1}{2} \int_{x}^{y} \frac{f(t)}{t^{2}}dt + \frac{1}{2} \int_{x}^{y} \frac{f \left( \frac{abt}{(a + b)t - ab}\right)}{t^{2}} dt\\
&=& \frac{1}{2} \int_{x}^{y} \frac{f(t)}{t^{2}}dt + \frac{1}{2} \int_{\frac{aby}{(a + b)y - ab}}^{\frac{abx}{(a + b)x - ab}} \frac{f (t)}{t^{2}} dt,
\end{eqnarray*}
and
$$\frac{f^{\breve{}}(x) + f^{\breve{}}(y)}{2} = \frac{1}{4}\left[ f(x) + f\left( \frac{abx}{(a + b)x - ab} \right) + f(y) + f\left( \frac{aby}{(a + b)y - ab} \right)    \right].$$
Then by (\ref{E0001}) we deduce the desired result (\ref{E001}).
\end{proof}
\begin{remark}
If we take $x = a$ and $y = b$ in (\ref{E001}), then we get (\ref{E01}).\\
If for given $ x \in [a,b]$, we take $y = \frac{abx}{(a + b)x - ab}$, then from (\ref{E001}) we get
$$ f \left( \frac{2ab}{a + b} \right) \leq \frac{1}{2}\frac{abx}{2ab - (a + b)x}  \int_{x}^{\frac{abx}{(a + b)x - ab}} \frac{f(t)}{t^{2}}dt \leq \frac{1}{2}\left[ f(x) + f\left( \frac{abx}{(a + b)x - ab} \right) \right],$$
where $x \neq \frac{a + b}{2}$, provided that
$f: [a,b]   \subset \mathbb{R} \setminus \{0\} \to \mathbb{R}$ is symmetrized harmonic convex on interval $[a,b]$.\\
Integrating this inequality over $x$ we get the following refinement of the first part of (\ref{E01}).
$$ f \left( \frac{2ab}{a + b} \right) \leq \frac{1}{2(b - a)}\int_{a}^{b} \left[ \frac{abx}{2ab - (a + b)x}  \int_{x}^{\frac{abx}{(a + b)x - ab}} \frac{f(t)}{t^{2}}dt \right] dx\leq \frac{1}{b - a}\int_{a}^{b}f^{\breve{}}(x)dx ,$$
provided that
$f: [a,b]   \subset \mathbb{R} \setminus \{0\} \to \mathbb{R}$ is symmetrized harmonic convex on interval $[a,b]$.
\end{remark}
When the function is harmonic convex, we have the following inequalities as well.
\begin{remark}
$f: [a,b]   \subset \mathbb{R} \setminus \{0\} \to \mathbb{R}$ is harmonic convex, then from (\ref{E001}) we have the inequalities
$$ f \left( \frac{2ab}{a + b} \right) \leq \frac{1}{2} \left[ f\left( \frac{2xy}{x + y} \right) + f\left( \frac{2abxy}{2xy(a + b) - ab(x + y)} \right)  \right]$$
$$ \leq \frac{xy}{2(y - x)}\left[ \int_{x}^{y} \frac{f(t)}{t^{2}}dt + \frac{1}{2} \int_{\frac{aby}{(a + b)y - ab}}^{\frac{abx}{(a + b)x - ab}} \frac{f (t)}{t^{2}} dt \right]$$
$$ \leq \frac{1}{4}\left[ f(x) + f\left( \frac{abx}{(a + b)x - ab} \right) + f(y) + f\left( \frac{aby}{(a + b)y - ab} \right)    \right]$$
for any $x,y \in [a,b]$, $x \neq y$.
\end{remark}
\section{\bf{The Case of One harmonic and the other Symmetrized harmonic convex functions}}
In this section, we analyze the case in which one function is harmonic convex (concave) in the classical sense and the other is symmetrized harmonic convex (concave) on an interval $[a,b].$\\
\begin{theorem}
Assume that $g: [a,b]   \subset \mathbb{R} \setminus \{0\} \to \mathbb{R}$ is harmonic convex (concave) and $f: [a,b]   \subset \mathbb{R} \setminus \{0\} \to \mathbb{R}$ is symmetrized harmonic convex (concave) and integrable on the interval $[a,b]$. Then we have
\begin{multline}\label{E11}
  \frac{f(a) + f(b)}{2} \frac{ab}{b - a}\int_{a}^{b} \frac{g(t)}{t^{2}}dt + \frac{g(a) + g(b)}{2} \frac{ab}{b - a}\int_{a}^{b} \frac{f(t)}{t^{2}}dt - \frac{f(a) + f(b)}{2} \frac{g(a) + g(b)}{2}\\
  \leq \frac{ab}{b - a} \int_{a}^{b} \frac{f^{\breve{}}(t)g(t)}{t^{2}}dt,
 \end{multline}
and
\begin{multline}\label{E118}
  \frac{ab}{b - a} \int_{a}^{b}\frac{f^{\breve{}}(t)  g ( t)}{t^{2}}dt\\
\leq f\big( \frac{2ab}{(a + b)} \big)\frac{ab}{b - a} \int_{a}^{b}\frac{f(t)  }{t^{2}}dt + f\big( \frac{2ab}{(a + b)} \big) \frac{ab}{b - a} \int_{a}^{b}\frac{g(t)  }{t^{2}}dt -f\big( \frac{2ab}{(a + b)} \big) \frac{g(a) + g(b)}{2}.
\end{multline}
\end{theorem}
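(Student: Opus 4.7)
My plan is to reduce both inequalities to the nonnegativity of a suitable pointwise product and then integrate against the weight $dt/t^{2}$. Because $g$ is harmonic convex it lies in $SHC([a,b])$, so Theorem~2 together with Remark~1 applies to both $f$ and $g$, giving the uniform bounds
\[
f\!\left(\tfrac{2ab}{a+b}\right)\leq f^{\breve{}}(t)\leq \tfrac{f(a)+f(b)}{2},\qquad g\!\left(\tfrac{2ab}{a+b}\right)\leq g^{\breve{}}(t)\leq \tfrac{g(a)+g(b)}{2},
\]
for every $t\in[a,b]$; I denote these four bounds by $m_{f},M_{f},m_{g},M_{g}$.

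The technical heart of the argument is to establish the identity
\[
\int_{a}^{b}\frac{f^{\breve{}}(t)\,g^{\breve{}}(t)}{t^{2}}\,dt\;=\;\int_{a}^{b}\frac{f^{\breve{}}(t)\,g(t)}{t^{2}}\,dt,
\]
together with the companion statement $\int_{a}^{b}h^{\breve{}}(t)/t^{2}\,dt=\int_{a}^{b}h(t)/t^{2}\,dt$ for $h=f$ and $h=g$. Both rest on the observation that the map $\pi(t):=\tfrac{abt}{(a+b)t-ab}$ is an involution of $[a,b]$ with $d\pi(t)/\pi(t)^{2}=-dt/t^{2}$, coupled with the trivial fact $f^{\breve{}}\circ\pi=f^{\breve{}}$. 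After expanding $g^{\breve{}}(t)=\tfrac{1}{2}[g(t)+g(\pi(t))]$, the substitution $s=\pi(t)$ collapses the integral of the second summand onto that of the first. Verifying this identity cleanly is, in my view, the principal obstacle, since it is what allows the bilinear term $f^{\breve{}}g^{\breve{}}$ arising naturally below to be rewritten as the one-sided product $f^{\breve{}}g$ that appears in the statement.

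Granting these preliminaries, each inequality follows from one pointwise multiplication step. For (\ref{E11}), I would start from
\[
\bigl(M_{f}-f^{\breve{}}(t)\bigr)\bigl(M_{g}-g^{\breve{}}(t)\bigr)\geq 0,
\]
divide by $t^{2}$, integrate over $[a,b]$, and multiply through by $\tfrac{ab}{b-a}$. The linear cross terms collapse via $\int h^{\breve{}}/t^{2}=\int h/t^{2}$, and the quadratic term collapses via the key identity, producing exactly (\ref{E11}) after rearrangement. For (\ref{E118}) the same procedure applied to
\[
\bigl(f^{\breve{}}(t)-m_{f}\bigr)\bigl(M_{g}-g^{\breve{}}(t)\bigr)\geq 0
\]
yields the matching upper bound of Chebyshev type. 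Finally, in the concave case the signs of both $M_{h}-h^{\breve{}}$ and $h^{\breve{}}-m_{h}$ flip simultaneously, so each product remains nonnegative and the same integrated inequalities are obtained without any change in the direction of the inequality.
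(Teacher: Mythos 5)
Your proof is correct in substance and is, at heart, the same Chebyshev-type ``product of nonnegative gaps'' argument the paper uses, executed in a different parametrization. The paper substitutes $t=\frac{ab}{\lambda a+(1-\lambda)b}$, multiplies the chord gap $(1-\lambda)g(a)+\lambda g(b)-g\!\left(\frac{ab}{\lambda a+(1-\lambda)b}\right)\ge 0$ by the gap $\frac{f(a)+f(b)}{2}-f^{\breve{}}\ge 0$ (respectively $f^{\breve{}}-f\!\left(\frac{2ab}{a+b}\right)\ge 0$), and then evaluates $\int_0^1[(1-\lambda)g(a)+\lambda g(b)]f^{\breve{}}\,d\lambda$ by exploiting the symmetry of $f^{\breve{}}$. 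You instead symmetrize $g$ as well (legitimate, since $HC(I)\subset SHC(I)$ makes Theorem 2 available for $g^{\breve{}}$), replace the chord of $g$ by the constant bound $\frac{g(a)+g(b)}{2}$, and work directly with the weight $dt/t^{2}$; your identities $\int_a^b h^{\breve{}}(t)t^{-2}dt=\int_a^b h(t)t^{-2}dt$ and $\int_a^b f^{\breve{}}(t)g^{\breve{}}(t)t^{-2}dt=\int_a^b f^{\breve{}}(t)g(t)t^{-2}dt$ are valid because $\pi(t)=\frac{abt}{(a+b)t-ab}$ satisfies $\frac1t+\frac1{\pi(t)}=\frac1a+\frac1b$ (hence $dt/t^{2}=-d\pi/\pi^{2}$, $\pi(a)=b$, $\pi(b)=a$) and $f^{\breve{}}\circ\pi=f^{\breve{}}$. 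Expanding $\left(\frac{f(a)+f(b)}{2}-f^{\breve{}}\right)\left(\frac{g(a)+g(b)}{2}-g^{\breve{}}\right)\ge 0$ and using $\frac{ab}{b-a}\int_a^b t^{-2}dt=1$ reproduces the first inequality exactly; your pointwise bound on $g^{\breve{}}$ is weaker than the paper's chord bound, but after integration nothing is lost, and the concave case is handled identically in both arguments.

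For the second inequality, however, you should be explicit about what your product actually yields: integrating $\left(f^{\breve{}}(t)-f\!\left(\frac{2ab}{a+b}\right)\right)\left(\frac{g(a)+g(b)}{2}-g^{\breve{}}(t)\right)\ge 0$ gives
\begin{multline*}
\frac{ab}{b-a}\int_a^b\frac{f^{\breve{}}(t)\,g(t)}{t^{2}}\,dt
\le \frac{g(a)+g(b)}{2}\,\frac{ab}{b-a}\int_a^b\frac{f(t)}{t^{2}}\,dt\\
+f\!\left(\frac{2ab}{a+b}\right)\frac{ab}{b-a}\int_a^b\frac{g(t)}{t^{2}}\,dt
-f\!\left(\frac{2ab}{a+b}\right)\frac{g(a)+g(b)}{2},
\end{multline*}
so the coefficient of $\frac{ab}{b-a}\int_a^b f(t)t^{-2}dt$ is $\frac{g(a)+g(b)}{2}$, not $f\!\left(\frac{2ab}{a+b}\right)$ as printed in (\ref{E118}). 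This is not a defect of your method: the paper's own integration of its pointwise inequality produces exactly this right-hand side (its evaluation of $\int_0^1[(1-\lambda)g(a)+\lambda g(b)]f^{\breve{}}\,d\lambda$ equals $\frac{g(a)+g(b)}{2}\cdot\frac{ab}{b-a}\int_a^b f(t)t^{-2}dt$), and the inequality as literally printed is false in general: for $f(t)=g(t)=t$ on $[1,2]$ the left-hand side is $1+\frac43\ln 2\approx 1.92$ while the printed right-hand side is $\frac{16}{3}\ln 2-2\approx 1.70$. Thus your argument proves the corrected form of (\ref{E118}), which is precisely what the paper's proof establishes as well; state that corrected form rather than claiming the printed one.
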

\begin{proof}
Assume that $g$ is harmonic convex and $f$ is symmetrized harmonic convex on $[a,b]$, then for any $\lambda \in [0,1]$
\begin{equation}\label{E2}
(1 - \lambda) g(a) + \lambda g(b) \geq g \big(  \frac{ab}{\lambda a + (1 - \lambda) b} \big)
\end{equation}
and
\begin{equation}\label{E3}
\frac{f(a) + f(b)}{2}  \geq f^{\breve{}}\big(\frac{ab}{\lambda a + (1 - \lambda) b} \big) \geq f\big( \frac{2ab}{a + b} \big),
\end{equation}
where, by (\ref{E1})
$$
f^{\breve{}}\big(\frac{ab}{\lambda a + (1 - \lambda) b} \big) = \frac{1}{2} \big [ f\big(\frac{ab}{\lambda a + (1 - \lambda) b} \big) + f\big(\frac{ab}{(1 - \lambda) a + \lambda b} \big)   \big],\;\lambda \in [0,1].
$$
By (\ref{E2}) and (\ref{E3}), we have
\begin{eqnarray*}
0 &\leq& \big[  (1 - \lambda) g(a) + \lambda g(b) -  g \big(  \frac{ab}{\lambda a + (1 - \lambda) b} \big) \big]\\
&\times& \Big[ \frac{f(a) + f(b)}{2} -  f^{\breve{}}\big(\frac{ab}{\lambda a + (1 - \lambda) b} \big) \Big]\\
&=& \big[  (1 - \lambda) g(a) + \lambda g(b)\big]\frac{f(a) + f(b)}{2} - \frac{f(a) + f(b)}{2} g \big(  \frac{ab}{\lambda a + (1 - \lambda) b} \big)\\
&-& \big[  (1 - \lambda) g(a) + \lambda g(b)\big]f^{\breve{}}\big(\frac{ab}{\lambda a + (1 - \lambda) b} \big)\\
&+& f^{\breve{}}\big(\frac{ab}{\lambda a + (1 - \lambda) b} \big)  g \big(  \frac{ab}{\lambda a + (1 - \lambda) b} \big)
\end{eqnarray*}
That is equivalent to
$$
\big[  (1 - \lambda) g(a) + \lambda g(b)\big] \frac{f(a) + f(b)}{2} + f^{\breve{}}\big(\frac{ab}{\lambda a + (1 - \lambda) b} \big)  g \big(  \frac{ab}{\lambda a + (1 - \lambda) b} \big)
$$
$$\geq  \frac{f(a) + f(b)}{2} g \big(  \frac{ab}{\lambda a + (1 - \lambda) b} \big) +  \big[  (1 - \lambda) g(a) + \lambda g(b)\big]f^{\breve{}}\big(\frac{ab}{\lambda a + (1 - \lambda) b} \big)  $$
Integrating over $\lambda$ on $[0,1]$, we get
$$
 \frac{f(a) + f(b)}{2} \int_{0}^{1} \big[  (1 - \lambda) g(a) + \lambda g(b)\big] d\lambda
$$
$$
+ \int_{0}^{1}f^{\breve{}}\big(\frac{ab}{\lambda a + (1 - \lambda) b} \big)  g \big(  \frac{ab}{\lambda a + (1 - \lambda) b} \big)d\lambda
$$
$$\geq \frac{f(a) + f(b)}{2} \int_{0}^{1} g \big(  \frac{ab}{\lambda a + (1 - \lambda) b} \big)d\lambda$$
\begin{equation}\label{E33}
+ \int_{0}^{1} \big[  (1 - \lambda) g(a) + \lambda g(b)\big]f^{\breve{}}\big(\frac{ab}{\lambda a + (1 - \lambda) b} \big)d\lambda
\end{equation}
Observe that
$$\int_{0}^{1} \big[  (1 - \lambda) g(a) + \lambda g(b)\big] d\lambda = \frac{g(a) + g(b)}{2}$$

$$\int_{0}^{1} g \big(  \frac{ab}{\lambda a + (1 - \lambda) b} \big)d\lambda = \frac{ab}{b - a}\int_{a}^{b} \frac{g(t)}{t^{2}} dt,$$
and
$$\int_{0}^{1}f^{\breve{}}\big(\frac{ab}{\lambda a + (1 - \lambda) b} \big)  g \big(  \frac{ab}{\lambda a + (1 - \lambda) b} \big)d\lambda = \frac{ab}{b - a}\int_{a}^{b} \frac{f^{\breve{}}(t)g(t)}{t^{2}} dt.$$
Also
\begin{multline}\label{E4}
    \int_{0}^{1} \big[  (1 - \lambda) g(a) + \lambda g(b)\big]f^{\breve{}}\big(\frac{ab}{\lambda a + (1 - \lambda) b} \big)d\lambda\\
    = g(a) \int_{0}^{1}  (1 - \lambda)f^{\breve{}}\big(\frac{ab}{\lambda a + (1 - \lambda) b} \big)d\lambda + g(b) \int_{0}^{1}\lambda f^{\breve{}}\big(\frac{ab}{\lambda a + (1 - \lambda) b} \big)d\lambda
\end{multline}
Since $f^{\breve{}}$ is symmetric, then
$$\int_{0}^{1}  (1 - \lambda)f^{\breve{}}\big(\frac{ab}{\lambda a + (1 - \lambda) b} \big)d\lambda = \int_{0}^{1}  (1 - \lambda)f^{\breve{}}\big(\frac{ab}{(1 - \lambda) a + \lambda b} \big)d\lambda$$
By changing the variable $ s = 1 - \lambda$, $\lambda \in [0,1]$, we have
$$\int_{0}^{1}  (1 - \lambda)f^{\breve{}}\big(\frac{ab}{\lambda a + (1 - \lambda) b} \big)d\lambda = \int_{0}^{1}  s f^{\breve{}}\big(\frac{ab}{(1 - s) a + s b} \big)ds,$$
and by (\ref{E4}) we get
\begin{multline}\label{E5}
 \int_{0}^{1} \big[  (1 - \lambda) g(a) + \lambda g(b)\big]f^{\breve{}}\big(\frac{ab}{\lambda a + (1 - \lambda) b} \big)d\lambda\\
 = g(a) \int_{0}^{1}  s f^{\breve{}}\big(\frac{ab}{(1 - s) a + s b} \big)ds + g(b) \int_{0}^{1}  \lambda f^{\breve{}}\big(\frac{ab}{(1 - \lambda) a + \lambda b} \big)d\lambda\\
 = [g(a) + g)b)] \int_{0}^{1}  \lambda f^{\breve{}}\big(\frac{ab}{(1 - \lambda) a + \lambda b} \big)d\lambda.
 \end{multline}
 Further,
 \begin{multline}\label{E6}
 \int_{0}^{1}  \lambda f^{\breve{}}\big(\frac{ab}{(1 - \lambda) a + \lambda b} \big)d\lambda\\
 = \frac{1}{2} \int_{0}^{1}  \lambda \big[f\big(\frac{ab}{(1 - \lambda) a + \lambda b} \big) +   f\big(\frac{ab}{\lambda a + (1 - \lambda) b} \big) \big]d\lambda\\
 = \frac{1}{2} \frac{ab}{b - a}\int_{a}^{b} \frac{f(t)}{t^{2}}dt.
 \end{multline}
 By the inequality (\ref{E33}), we then get
 \begin{multline}\label{E7}
 \frac{f(a) + f(b)}{2} \frac{g(a) + g(b)}{2} + \frac{ab}{b - a} \int_{a}^{b} \frac{f^{\breve{}}(t)g(t)}{t^{2}}dt\\
 \geq \frac{f(a) + f(b)}{2} \frac{ab}{b - a}\int_{a}^{b} \frac{g(t)}{t^{2}}dt + \frac{g(a) + g(b)}{2} \frac{ab}{b - a}\int_{a}^{b} \frac{f(t)}{t^{2}}dt,
 \end{multline}
 and the inequality (\ref{E11}) is proved.\\
 By (\ref{E2}) and (\ref{E3}), we also have
 \begin{eqnarray*}
0 &\leq& \big[  (1 - \lambda) g(a) + \lambda g(b) -  g \big(  \frac{ab}{\lambda a + (1 - \lambda) b} \big) \big]\\
&\times& \Big[ f^{\breve{}}\big(\frac{ab}{\lambda a + (1 - \lambda) b} \big)  - f\big( \frac{ab}{2(a + b)} \big) \Big]\\
&=&   f\big( \frac{ab}{2(a + b)} \big) g \big(  \frac{ab}{\lambda a + (1 - \lambda) b} \big) - \big[  (1 - \lambda) g(a) + \lambda g(b)\big]f\big( \frac{ab}{2(a + b)} \big)\\
&+& \big[  (1 - \lambda) g(a) + \lambda g(b)\big]f^{\breve{}}\big(\frac{ab}{\lambda a + (1 - \lambda) b} \big)\\
&-& f^{\breve{}}\big(\frac{ab}{\lambda a + (1 - \lambda) b} \big)  g \big(  \frac{ab}{\lambda a + (1 - \lambda) b} \big)\; for\;any\;\lambda \in [0,1],
\end{eqnarray*}
which is equivalent to
$$\big[  (1 - \lambda) g(a) + \lambda g(b)\big]f\big( \frac{ab}{2(a + b)} \big) + f^{\breve{}}\big(\frac{ab}{\lambda a + (1 - \lambda) b} \big)  g \big(  \frac{ab}{\lambda a + (1 - \lambda) b} \big)$$
$$\leq f\big( \frac{ab}{2(a + b)} \big) g \big(  \frac{ab}{\lambda a + (1 - \lambda) b} \big) + \big[  (1 - \lambda) g(a) + \lambda g(b)\big]f^{\breve{}}\big(\frac{ab}{\lambda a + (1 - \lambda) b} \big).$$
Taking integral over$\lambda \in [0,1]$, we get
\begin{multline}\label{E8}
 f\big( \frac{2ab}{(a + b)} \big) \frac{g(a) + g(b)}{2} + \frac{ab}{b - a} \int_{a}^{b}\frac{f^{\breve{}}(t)  g ( t)}{t^{2}}dt\\
\leq f\big( \frac{2ab}{(a + b)} \big)\frac{ab}{b - a} \int_{a}^{b}\frac{f(t)  }{t^{2}}dt + f\big( \frac{2ab}{(a + b)} \big) \frac{ab}{b - a} \int_{a}^{b}\frac{g(t)  }{t^{2}}dt,\\
\end{multline}
and the (\ref{E118}) is proved.
\end{proof}
\section{\bf{Symmetrized harmonic h-convexity}}
Now we introduce the following concept generalizing the notion of harmonic h-convexity.
\begin{definition}
Let $h :J   \subset \mathbb{R}  \to [0,\infty)$ such that $(0,1) \subseteq J$ with $h$ not identical to 0. We say that the function $f: [a,b]   \subset \mathbb{R} \setminus \{0\} \to \mathbb{R}$ is symmetrized harmonic h-convex (concave) on the interval $[a,b]$ if the symmetrical form $f^{\breve{}}$ is harmonic h-convex (concave) on $[a,b]$.
\end{definition}
\begin{definition}
Let $h :J   \subset \mathbb{R}  \to [0,\infty)$ such that $(0,1) \subseteq J$ with $h$ not identical to 0. We say that the function $f: [a,b]   \subset \mathbb{R} \setminus \{0\} \to \mathbb{R}$ is symmetrized harmonic $(p,h)$-convex (concave) on the interval $[a,b]$ if the symmetrical form $f^{\breve{}}$ is harmonic $(p,h)$-convex (concave) on $[a,b]$.
\end{definition}
By a similar proof to that of Theorem 3, we can state the following result as well:

\begin{theorem}
Assume that the function $f: [a,b]   \subset \mathbb{R} \setminus \{0\} \to [0,\infty)$ is symmetrized harmonic h-convex (concave) on the interval $[a,b]$ with $h$ integrable on $[0,1]$ and $f$ integrable on $[a,b]$. Then for any $x,y \in [a,b]$ we have the following inequalities
$$\frac{1}{4h(\frac{1}{2})} \left[ f\left( \frac{2xy}{x + y} \right) + f\left( \frac{2abxy}{2xy(a + b) - ab(x + y)} \right)  \right]$$
$$ \leq \frac{xy}{2(y - x)}\left[ \int_{x}^{y} \frac{f(t)}{t^{2}}dt +  \int_{\frac{aby}{(a + b)y - ab}}^{\frac{abx}{(a + b)x - ab}} \frac{f (t)}{t^{2}} dt \right]$$
\begin{equation}\label{E11}
\leq \frac{1}{2}\left[ f(x) + f\left( \frac{abx}{(a + b)x - ab} \right) + f(y) + f\left( \frac{aby}{(a + b)y - ab} \right)    \right]\int_{0}^{1} h(t) dt.
\end{equation}
In particular, we have
\begin{equation}
\frac{1}{2h(\frac{1}{2})} f\left( \frac{2ab}{a + b} \right) \leq \frac{ab}{b - a} \int_{a}^{b} \frac{f(t)}{t^{2}}dt \leq [f(a) + f(b)] \int_{0}^{1} h(t) dt.
\end{equation}
\end{theorem}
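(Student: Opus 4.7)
The plan is to mimic the proof of Theorem 3 almost verbatim, replacing the classical Hermite--Hadamard--\.Iscan inequality by its harmonic $h$-convex analogue. Recall that for a harmonic $h$-convex function $g$ on an interval $[x,y]$ the known Hermite--Hadamard--\.Iscan inequality reads
\begin{equation*}
\frac{1}{2h(\tfrac12)}\,g\!\left(\frac{2xy}{x+y}\right)\;\le\;\frac{xy}{y-x}\int_{x}^{y}\frac{g(t)}{t^{2}}\,dt\;\le\;[\,g(x)+g(y)\,]\int_{0}^{1}h(t)\,dt.
\end{equation*}
Since $f$ is symmetrized harmonic $h$-convex on $[a,b]$, $f^{\breve{}}$ is harmonic $h$-convex on $[a,b]$, hence on any subinterval $[x,y]\subseteq[a,b]$ (or $[y,x]$). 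The first step is therefore to apply the displayed inequality to $g=f^{\breve{}}$ on $[x,y]$.

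Next I would translate each of the three quantities back into expressions involving $f$ itself. For the left-hand term this is immediate from the definition of $f^{\breve{}}$:
\begin{equation*}
f^{\breve{}}\!\left(\frac{2xy}{x+y}\right)=\frac{1}{2}\left[f\!\left(\frac{2xy}{x+y}\right)+f\!\left(\frac{2abxy}{2xy(a+b)-ab(x+y)}\right)\right],
\end{equation*}
where one just verifies that the argument of the second $f$ is indeed $abt/((a+b)t-ab)$ evaluated at $t=2xy/(x+y)$. For the right-hand term we obtain
\begin{equation*}
f^{\breve{}}(x)+f^{\breve{}}(y)=\frac{1}{2}\left[f(x)+f\!\left(\frac{abx}{(a+b)x-ab}\right)+f(y)+f\!\left(\frac{aby}{(a+b)y-ab}\right)\right].
\end{equation*}
Multiplying by $\int_0^1 h(t)\,dt$ yields the second inequality of (\ref{E11}).

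The middle term requires the only non-trivial manipulation: to compute $\int_{x}^{y}f^{\breve{}}(t)/t^{2}\,dt$ one splits the integrand using the definition of $f^{\breve{}}$, and in the second half performs the change of variable $s=abt/((a+b)t-ab)$, which maps $[x,y]$ onto $\bigl[aby/((a+b)y-ab),\,abx/((a+b)x-ab)\bigr]$ with $ds/s^{2}=dt/t^{2}$. This is exactly the change of variable already carried out in the proof of Theorem 3, and it gives
\begin{equation*}
\int_{x}^{y}\frac{f^{\breve{}}(t)}{t^{2}}\,dt=\frac{1}{2}\int_{x}^{y}\frac{f(t)}{t^{2}}\,dt+\frac{1}{2}\int_{\frac{aby}{(a+b)y-ab}}^{\frac{abx}{(a+b)x-ab}}\frac{f(t)}{t^{2}}\,dt.
\end{equation*}
Substituting these three identities into the Hermite--Hadamard--\.Iscan inequality for $f^{\breve{}}$ and multiplying through by the appropriate constants produces (\ref{E11}).

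Finally, the ``In particular'' statement follows by setting $x=a$ and $y=b$: then $abx/((a+b)x-ab)=b$ and $aby/((a+b)y-ab)=a$, so $2abxy/(2xy(a+b)-ab(x+y))=2ab/(a+b)$ and the two integrals in the middle coincide, collapsing the chain to the claimed one-variable inequality. The only real obstacle is bookkeeping of the change-of-variable symmetry $t\mapsto abt/((a+b)t-ab)$, which is an involution of $[a,b]$ fixing $2ab/(a+b)$; this has already been verified in Theorem 3 and requires no new work here.
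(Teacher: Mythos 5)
Your proposal is correct and is exactly the argument the paper intends: the paper gives no separate proof for this theorem, stating only that it follows ``by a similar proof to that of Theorem 3,'' and your route --- applying the harmonic $h$-convex Hermite--Hadamard--\.Iscan inequality to $f^{\breve{}}$ on $[x,y]$, rewriting the three terms via the involution $t \mapsto \frac{abt}{(a+b)t-ab}$, and then taking $x=a$, $y=b$ for the particular case --- is precisely that adaptation. All the identities you use (the value of $f^{\breve{}}$ at the harmonic mean, the split of $\int_x^y f^{\breve{}}(t)t^{-2}\,dt$, and the endpoint sum) check out against the statement.
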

\begin{remark}
If for a given $x \in [a,b]$, we take $y = \frac{abx}{(a + b)x -ab}$, then from (\ref{E11}) we get
$$\frac{1}{2h(\frac{1}{2})} f \left( \frac{2ab}{a + b} \right) \leq \frac{1}{2}\frac{abx}{2ab - (a + b)x}  \int_{x}^{\frac{abx}{(a + b)x - ab}} \frac{f(t)}{t^{2}}dt \leq \frac{1}{2}\left[ f(x) + f\left( \frac{abx}{(a + b)x - ab} \right) \right]\int_{0}^{1} h(t) dt,$$
where $x \neq \frac{a + b}{2}$, provided that
$f: [a,b]   \subset \mathbb{R} \setminus \{0\} \to \mathbb{R}$ is symmetrized harmonic h-convex on interval $[a,b]$.\\
Integrating on $[a,b]$ over $x$, we get
$$\frac{1}{4h(\frac{1}{2})} f \left( \frac{2ab}{a + b} \right) \leq \frac{1}{4(b - a)}\int_{a}^{b} \left[ \frac{abx}{2ab - (a + b)x}  \int_{x}^{\frac{abx}{(a + b)x - ab}} \frac{f(t)}{t^{2}}dt \right] dx\leq \frac{1}{b - a}\int_{a}^{b}f^{\breve{}}(x)dx \int_{0}^{1} h(t) dt.$$
\end{remark}
\begin{theorem}
Assume that $f : [a,b]  \subset \mathbb{R} \setminus \{0\} \to [0,\infty)$ is symmetrized harmonic h-convex on $[a,b]$. Then for any $x \in [a,b]$, we have the bounds
\begin{equation}\label{E22}
\frac{1}{2h(\frac{1}{2})} f\big( \frac{2ab}{a + b} \big) \leq f^{\breve{}}(x) \leq \left[ \left( \frac{b (a - x)}{x ( a - b)}\right) + h\left(\frac{a (x - b)}{x ( a - b)}\right)\right]\frac{f(a) + f(b)}{2}
\end{equation}
\end{theorem}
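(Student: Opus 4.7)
The plan is to apply the harmonic $h$-convexity of $f^{\breve{}}$ directly---once via a convex-combination parametrization of $x$ for the upper bound, and once to the ``reflected pair'' $(x, x')$ for the lower bound, where $x' := \frac{abx}{(a+b)x-ab}$ is the harmonic reflection implicit in (\ref{E2}).

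For the upper bound I would write an arbitrary $x \in [a,b]$ as $x = \frac{ab}{tb + (1-t)a}$ with $t = \frac{a(x-b)}{x(a-b)} \in [0,1]$ and $1 - t = \frac{b(a-x)}{x(a-b)}$. Harmonic $h$-convexity of $f^{\breve{}}$ on $[a,b]$, applied to the endpoints, then yields
$$f^{\breve{}}(x) \le h\!\left(\frac{a(x-b)}{x(a-b)}\right) f^{\breve{}}(a) + h\!\left(\frac{b(a-x)}{x(a-b)}\right) f^{\breve{}}(b).$$
Since $\frac{ab\cdot a}{(a+b)a-ab} = b$ and $\frac{ab\cdot b}{(a+b)b-ab} = a$, the definition (\ref{E2}) of the symmetrical transform forces $f^{\breve{}}(a) = f^{\breve{}}(b) = \frac{f(a)+f(b)}{2}$, and substituting recovers the right-hand inequality of (\ref{E22}). (The statement appears to have a typographical omission: the first summand inside the bracket should read $h\!\left(\frac{b(a-x)}{x(a-b)}\right)$ rather than the argument alone, so as to match the $h$-convex template.)

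For the lower bound I would exploit two symmetries of $f^{\breve{}}$. First, the reflection $x\mapsto x'$ is an involution on $[a,b]$, so by (\ref{E2}) we have $f^{\breve{}}(x) = f^{\breve{}}(x')$ for every $x \in [a,b]$. Second, the identity $\frac{1}{x} + \frac{1}{x'} = \frac{1}{a} + \frac{1}{b}$ gives $\frac{2xx'}{x+x'} = \frac{2ab}{a+b}$, and since this value is a fixed point of the reflection we obtain $f^{\breve{}}\!\left(\frac{2ab}{a+b}\right) = f\!\left(\frac{2ab}{a+b}\right)$. Applying harmonic $h$-convexity of $f^{\breve{}}$ at the midpoint $t=1/2$ to the pair $(x,x')$ (both of which lie in $[a,b]$) gives
$$f\!\left(\frac{2ab}{a+b}\right) = f^{\breve{}}\!\left(\frac{2xx'}{x+x'}\right) \le h(1/2)\bigl[f^{\breve{}}(x) + f^{\breve{}}(x')\bigr] = 2\,h(1/2)\,f^{\breve{}}(x),$$
and dividing through by $2h(1/2)$ delivers the left-hand inequality of (\ref{E22}).

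The only real obstacle is the algebraic bookkeeping---the involutivity of $x \mapsto x'$, the fact that $a$ and $b$ are exchanged, and the harmonic-mean identity $\frac{2xx'}{x+x'} = \frac{2ab}{a+b}$---but each of these is a one-line manipulation. No property of $f$ beyond the harmonic $h$-convexity of its symmetric transform is used, and the structure of the argument is exactly that of Theorem 3 with the linear convex combination of $f^{\breve{}}(a)$ and $f^{\breve{}}(b)$ replaced by its $h$-weighted analogue.
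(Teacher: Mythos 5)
Your proof is correct and follows essentially the same route as the paper's: the endpoint weights $h\bigl(\tfrac{a(x-b)}{x(a-b)}\bigr)$, $h\bigl(\tfrac{b(a-x)}{x(a-b)}\bigr)$ together with $f^{\breve{}}(a)=f^{\breve{}}(b)=\tfrac{f(a)+f(b)}{2}$ for the upper bound, and the midpoint $h$-convexity applied to the reflected pair $(x,x')$ with $f^{\breve{}}(x)=f^{\breve{}}(x')$ and $f^{\breve{}}\bigl(\tfrac{2ab}{a+b}\bigr)=f\bigl(\tfrac{2ab}{a+b}\bigr)$ for the lower bound. Your observation that the first summand in (\ref{E22}) is missing an $h$ is also consistent with the first line of the paper's own proof, where the factor appears as $h\bigl(\tfrac{b(a-x)}{x(a-b)}\bigr)$ before being dropped, evidently a typographical slip.
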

\begin{proof}
Since $f^{\breve{}}$ is harmonic h-convex on $[a,b]$, then for any $x \in [a,b]$ we have
$$  f^{\breve{}}(\frac{2ab}{a + b}) \leq h(\frac{1}{2})[f^{\breve{}}(x) + f^{\breve{}}(\frac{abx}{(a + b)x - ab})] ,$$
and since
$$\frac{f^{\breve{}}(x) + f^{\breve{}}(\frac{abx}{(a + b)x - ab})}{2} = \frac{f(x) + f(\frac{abx}{(a + b)x - ab})}{2},$$
while
$$f^{\breve{}}(\frac{2ab}{a + b}) = f(\frac{2ab}{a + b}),$$
we get the first inequality in (\ref{E22}).\\
Also, by the harmonic h-convexity of $f^{\breve{}}$ we have any $x \in [a,b]$ that
\begin{eqnarray*}
f^{\breve{}}(x) &\leq&h \left( \frac{b (a - x)}{x ( a - b)}\right)f^{\breve{}}(b) + h\left(\frac{a (x - b)}{x ( a - b)}\right)f^{\breve{}}(a)\\
&=& \left( \frac{b (a - x)}{x ( a - b)}\right)\frac{f(a) + f(b)}{2} + h\left(\frac{a (x - b)}{x ( a - b)}\right) \frac{f(a) + f(b)}{2}\\
&=& \left[ \left( \frac{b (a - x)}{x ( a - b)}\right) + h\left(\frac{a (x - b)}{x ( a - b)}\right)\right]\frac{f(a) + f(b)}{2},
\end{eqnarray*}
which proves the second part of (\ref{E22}).
\end{proof}
\begin{corollary}
Assume that the function $f: [a,b]   \subset \mathbb{R} \setminus \{0\} \to [0,\infty)$ is symmetrized harmonic h-convex (concave) on the interval $[a,b]$ with $h$ integrable on $[0,1]$ and $f$ integrable on $[a,b]$. If $w:[a,b] \to [0,\infty)$ is integrable on $[a,b]$, then
$$\frac{1}{2h(\frac{1}{2})} f\big( \frac{2ab}{a + b} \big)\int_{a}^{b} w(t) dt \leq \frac{1}{2} \int_{a}^{b} w(t) [ f(t) + f \left( \frac{abt}{(a + b)t - ab} \right)]dt$$
\begin{equation}\label{E33}
 \leq \frac{f(a) + f(b)}{2} \int_{a}^{b} h \left( \frac{b (a - t)}{t ( a - b)}\right) [w(t) + w(\frac{abt}{(a + b)t - ab}) ] dt
 \end{equation}
\end{corollary}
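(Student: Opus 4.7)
The plan is to apply the two-sided pointwise bound for $f^{\breve{}}$ established in the preceding Theorem,
$$\frac{1}{2h(\frac{1}{2})} f\Big(\frac{2ab}{a+b}\Big) \leq f^{\breve{}}(x) \leq \Big[h\Big(\frac{b(a-x)}{x(a-b)}\Big) + h\Big(\frac{a(x-b)}{x(a-b)}\Big)\Big]\frac{f(a)+f(b)}{2},$$
valid for every $x\in[a,b]$, and then multiply this chain through by the nonnegative integrable weight $w(x)$ and integrate over $[a,b]$. Because $w\geq 0$ the ordering is preserved, so the corollary reduces to (i) unfolding $f^{\breve{}}$ in the middle term using its definition, and (ii) reassembling the two $h$-integrals on the right into the single integral claimed in (\ref{E33}).

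For (i), integrating $\frac{1}{2h(1/2)}f(2ab/(a+b))\leq f^{\breve{}}(x)$ multiplied by $w(x)$ yields the leftmost inequality at once, and the identity $2f^{\breve{}}(x)=f(x)+f\big(abx/((a+b)x-ab)\big)$ converts $\int_a^b w(x)f^{\breve{}}(x)\,dx$ into the middle expression of (\ref{E33}). For (ii), integrating the upper bound against $w$ produces an upper bound of $\frac{f(a)+f(b)}{2}(I_1+I_2)$ with $I_1=\int_a^b w(x)h(b(a-x)/(x(a-b)))\,dx$ and $I_2=\int_a^b w(x)h(a(x-b)/(x(a-b)))\,dx$. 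The key step is to merge these via the involution $\phi(t)=abt/((a+b)t-ab)$, which interchanges $a$ and $b$ and, as a direct computation shows, swaps the two arguments of $h$: one verifies that $a(\phi(t)-b)/(\phi(t)(a-b))=b(a-t)/(t(a-b))$. Applied to $I_2$, the substitution $x=\phi(t)$ converts its integrand into $h(b(a-t)/(t(a-b)))w(\phi(t))$, and adding $I_1$ then produces exactly the combined integrand $h(b(a-t)/(t(a-b)))[w(t)+w(\phi(t))]$ appearing in (\ref{E33}).

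The main obstacle lies in (ii), since the involution $\phi$ has a nontrivial Jacobian $|dx/dt|=a^2b^2/((a+b)t-ab)^2$ and is Lebesgue-measure-preserving only in disguise: its natural invariant measure is $dt/t^2$, which is precisely the measure that appears in every symmetrized Hermite-Hadamard identity in the earlier sections of the paper. The bookkeeping of this Jacobian must therefore be handled carefully when converting $I_2$ into the stated form, either by absorbing the factor into the weight or by reading the claimed identity with respect to the $\phi$-invariant measure. Once this point is settled, the remaining steps are direct linearity arguments and the two bounds in (\ref{E33}) follow immediately.
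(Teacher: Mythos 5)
Your overall route is the same as the paper's: take the two--sided pointwise bound for $f^{\breve{}}$ from the preceding theorem, multiply by $w\geq 0$, integrate over $[a,b]$, unfold $f^{\breve{}}$ in the middle term, and then try to merge the two $h$--integrals on the right by means of the involution $\phi(t)=\frac{abt}{(a+b)t-ab}$. The problem is that you stop exactly where the work has to be done. The merging identity you would need,
$$\int_a^b h\Big(\frac{a(t-b)}{t(a-b)}\Big)w(t)\,dt=\int_a^b h\Big(\frac{b(a-s)}{s(a-b)}\Big)w(\phi(s))\,ds,$$
is \emph{not} a consequence of the substitution $t=\phi(s)$: as you yourself observe, $\phi$ carries the Jacobian $\frac{a^2b^2}{((a+b)s-ab)^2}=\frac{\phi(s)^2}{s^2}$, and only the measure $dt/t^2$ is $\phi$--invariant, whereas the stated right--hand side is written against $dt$. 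Saying the bookkeeping ``must be handled carefully, either by absorbing the factor into the weight or by reading the identity with respect to the invariant measure'' is not a proof: absorbing the factor changes $w$, and changing the measure changes the statement. Indeed, already for $h\equiv 1$ the purported identity reduces to $\int_a^b w(t)\,dt=\int_a^b w(\phi(t))\,dt$, which fails for generic $w$ (e.g.\ $w(t)=t^2$ on $[1,2]$ gives $7/3$ on the left and about $1.71$ on the right). So the step genuinely fails as stated, and your claim that ``the two bounds follow immediately once this point is settled'' is unsupported.

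For what it is worth, the paper's own proof performs precisely this substitution and silently drops the Jacobian, so the obstacle you flagged is real and is not resolved there either. What your argument does legitimately establish is the chain with the unmerged right--hand side,
$$\frac{1}{2h(\frac12)}f\Big(\frac{2ab}{a+b}\Big)\int_a^b w(t)\,dt\leq\frac12\int_a^b w(t)\Big[f(t)+f(\phi(t))\Big]dt\leq\frac{f(a)+f(b)}{2}\int_a^b\Big[h\Big(\tfrac{b(a-t)}{t(a-b)}\Big)+h\Big(\tfrac{a(t-b)}{t(a-b)}\Big)\Big]w(t)\,dt,$$
which follows exactly as you describe. Passing from this to the combined integrand $h\big(\tfrac{b(a-t)}{t(a-b)}\big)\big[w(t)+w(\phi(t))\big]$ would require either replacing $dt$ by $dt/t^2$ (equivalently, replacing $w(t)$ by $t^2w(t)$ throughout) or imposing an additional transformation hypothesis on $w$; as written, your proposal leaves that gap open.
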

\begin{proof}
From (\ref{E22})we have
$$\frac{1}{2h(\frac{1}{2})} f\big( \frac{2ab}{a + b} \big) \leq f^{\breve{}}(t) \leq \left[ \left( \frac{b (a - t)}{t ( a - b)}\right) + h\left(\frac{a (t - b)}{t ( a - b)}\right)\right]\frac{f(a) + f(b)}{2}$$
for any $t \in [a,b]$.\\
Multiplying with$w(x) \geq 0$ and integrating over $t \in [a,b]$ we get
$$\frac{1}{2h(\frac{1}{2})} f\big( \frac{2ab}{a + b} \big)\int_{a}^{b} w(t) dt \leq \frac{1}{2} \int_{a}^{b} w(t) [ f(t) + f \left( \frac{abt}{(a + b)t - ab} \right)]dt$$
\begin{equation}\label{E44}
 \leq \frac{f(a) + f(b)}{2} \int_{a}^{b} \left[h \left( \frac{b (a - t)}{t ( a - b)}\right) + h\left(\frac{a (t - b)}{t ( a - b)}\right)\right]w(t)dt
 \end{equation}
 Observe that, by the changing the variable $t = \frac{abs}{(a + b)s - ab}$, $s \in [a,b]$, we have
 $$\int_{a}^{b} h\left(\frac{a (t - b)}{t ( a - b)}\right)w(t)dt = \int_{a}^{b} h\left(\frac{b (a - s)}{s ( a - b)}\right)w(\frac{abs}{(a + b)s - ab})ds, $$
then we get
$$\int_{a}^{b} \left[h \left( \frac{b (a - t)}{t ( a - b)}\right) + h\left(\frac{a (t - b)}{t ( a - b)}\right)\right]w(t)dt = \int_{a}^{b} h \left( \frac{b (a - t)}{t ( a - b)}\right)[w(t) + w(\frac{abt}{(a + b)t - ab}) ]dt$$
and by (\ref{E44}) we obtain the second part of (\ref{E33}).
\end{proof}
\section{Conclusion}
In this paper, we have studied the classes of Symmetrized Harmonic convex (concave), Symmetrized Harmonic $p$-convex (concave), Symmetrized Harmonic $h$-convex (concave), Symmetrized Harmonic $(p,h)$-convex (concave) functions. The class of Symmetrized Harmonic convex is larger then that Harmonic convex and showed that Hermite-Hadamard I\c{s}can type inequality holds for this class of functions. We also gave upper and lower bounds for the functions of this class. Furthermore, we  have developed some interesting results for this new class.
\subsection*{Acknowledgments}
 The authors wish to express their heartfelt thanks to the referees for their constructive comments and helpful suggestions to improve the final version of this paper.


\begin{thebibliography}{99}
\bibitem {II}
I. A. Baloch,  I.I\c{s}can, Some Ostrowski  Type Inequalities For Harmonically $(s,m)$-convex functoins in Second Sense, Int. J. Ana., Vol. 2015, Article ID 672675, 9 pages.\\
http://dx.doi.org/10.1155/2015/672675
\bibitem {II1}
I. A. Baloch,  $\dot{I}$mdat $\dot{I}$\c{s}can,Some Hermite-Hadamard type inequalities for Harmonically $(s,m)$-convex functions in second sense, arXive:1604.08445v1[math.CA],24 Apr 2016.
\bibitem {II2}
I. A. Baloch, I. Iscan and S. S. Dragomir, Fejer's type inequalities for Harmonically (s,m)-convex functions, International Journal of Analysis and Applications, Vol. 12, No. 2, (2016), 188-197 (ISSN 2291-8639) .
\bibitem {II3}
I. A. Baloch and I. Iscan, New Hermite-Hadamard and Simpson type inequalities for Harmonically (s,m)-convex functions in second sense, accepted in Journal of New Trends in Mathematical Sciences.
\bibitem {II4}
I. A. Baloch and I. Iscan, Some Hermite-Hadamard type integral inequalities for Harmonically (p,(s,m))-convex functions, accepted in Journal of Inequalities and Special Functions.
\bibitem {DM}
 S. S. Dragomir and B. Mond, Integral inequalities of Hadamard type for log-convex functions, Demonstration Math 2, 354-364, (1998).
 \bibitem {DP}
 S. S. Dragomir, C. E. M. Pearce, Selected topics on Hermite-Hadamard inequalities and applications, Vic. Uni., Aus. (2000).
 \bibitem {DPP}
 S. S. Dragomir, J. Pecaric and L. E. Persson, Some inequalities of Hadamard type, Soochow J. Math, 21, 335-341, (1995).
\bibitem{Ak}
$\dot{I}.\dot{I}$scan, Hermite-Hadamard type inequalities for harmonically convex functions, Hacettepe J. Math. and stat., vol 43 (6) (2014),935-942.
\bibitem{N1}
M. A. Noor, On Hermite Hadamard integral inequalities for the product of two non-convex functions, J. Adv. Math. Stud. 2, 53-62, (2009).
\bibitem{NNA}
M. A. Noor, K. I. Noor and M. U. Awan, Geometrically relative convex functions, Appl. Math. Infor. Sci, (2014).
\bibitem{NNA1}
M. A. Noor, K. I. Noor and M. U. Awan, Hermite-Hadamard inequalities for relative semi-convex functions and applications, Filomat, (2014).
\bibitem{NNA2}
M. A. Noor, K. I. Noor and M. U. Awan, on some inequalities for relative semi-convex functions, J. Ineqal. Appl. 2013.
\bibitem {MP}
J.Park,"New Ostrowski-Like type inequalities for differentiable $(s,m)$-convex mappings,Int. J.  Pur. and App. Math., Vol.78 No.8 2012,1077-1089.
 \bibitem{SYQ}
Y. Shaung, H-P. Yin and F. Qi, Hermite-Hadamard type integral inequalities for geometric-arithmatically $s$-convex functions, Ana. 33, 197-208, (2013).
\bibitem {MPP}
E.Set, $\dot{I}.\dot{I}$scan, F.Zehir, On some new inequalities of Hermite-Hadamard type involving harmonically convex function via fractional integrals, Kon. J. Math., 3(1) (2015), 42-55 .
\bibitem{SYQ}
Y. Shaung, H-P. Yin and F. Qi, Hermite-Hadamard type integral inequalities for geometric-arithmatically $s$-convex functions, Ana. 33, 197-208, (2013).
\bibitem{YTW}
G-S Yang, K-L Tseng and H-T Wang, A note on integral inequalities of hadamard type for log-convex and log-concave functions, Tai. J. Math.,16(2), 479-496, (2012).
\bibitem{ZJQ1}
T-Y-Zhang, A. P. Ji and F. Qi, On integral inequalities of Hermite-Hadamard type for $s$-geometrically convex functions, Abs. and App. Ana., Vol. 2012, Article ID 560586, 14 pages doi: 10.1155/2012/560586.
\bibitem{ZJQ2}
T-Y-Zhang, A. P. Ji and F. Qi, Some inequalities of Hermite-Hadamard type for GA-convex functions with application to means, Le Matematiche, vol. LXVIII (2013) Fasc. I, pp. 229339 doi: 10.4418/2013.68.1.17.
\bibitem{ZJQ}
T-Y-Zhang, A. P. Ji and F. Qi, Integral inequalities of Hermite Hadamard type for harmonically quasi-convex functions, Pro. Jan. Math. Soc., 16 (2013), no. 3, 399-407.
\bibitem{ZJQ1}
T-Y-Zhang, A. P. Ji and F. Qi, On integral inequalities of Hermite-Hadamard type for $s$-geometrically convex functions, Abs. and App. Ana., Vol. 2012, Article ID 560586, 14 pages doi: 10.1155/2012/560586.
\bibitem{ZJQ2}
T-Y-Zhang, A. P. Ji and F. Qi, Some inequalities of Hermite-Hadamard type for GA-convex functions with application to means, Le Matematiche, vol. LXVIII (2013) Fasc. I, pp. 229339 doi: 10.4418/2013.68.1.17.
\bibitem{ZW}
K.S. Zhang and J.P. Wan, p-convex functions and their properties, Pure Appl. Math. 23(1) (2007), 130-133.
\end{thebibliography}
\end{document}